\theoremstyle{definition}
\newtheorem{theorem}{Theorem}[section]
\newtheorem{definition}[theorem]{Definition}
\newtheorem{proposition}[theorem]{Proposition}
\newtheorem{corollary}[theorem]{Corollary}
\newtheorem{remark}[theorem]{Remark}
\DeclareMathOperator{\Diff}{Diff}
\DeclareMathOperator{\grp}{grp}
\DeclareMathOperator{\sing}{sing}
\title{Group cocycles on
the volume-preserving diffeomorphism group}
\author{Shuhei Maruyama}
\address{Graduate School of Mathematics,
Nagoya University, Japan}
\email{m17037h@math.nagoya-u.ac.jp}
\begin{document}

\begin{abstract}
  We construct two kinds of group cocycles
  on the volume-preserving diffeomorphism group.
  We show that, for the volume-preserving diffeomorphism group
  of the sphere,
  one of the cocycles gives the Euler class of flat
  sphere bundles.
\end{abstract}

\maketitle

\section{Introduction}

Let $M$ be a connected manifold and $n$ denote the
minimum positive number that the homology
$H_n(M;\mathbb{Z})$ is non-zero.
Let $\Omega$ denote a closed $n$-form on $M$
such that the cohomology class $[\Omega] \in H^n(M;\mathbb{R})$
is a non-zero image of the map
$H^n(M;\mathbb{Z}) \to H^n(M;\mathbb{R})$.
Let $G$ denote the group of $\Omega$-preserving diffeomorphisms
of $M$.
The normalized volume form and the group of
volume-preserving diffeomorphisms is an example.
In this paper, we construct two kinds of group-cocycles on $G$;
group $(n+1)$-cocycles $c_k$ with coefficients
in the trivial $G$-module $\mathbb{Z}$
(Definition \ref{def:c_k}),
and group $n$-cocycles $b_k$ with coefficients
in the trivial $G$-module $S^1 = \mathbb{R}/\mathbb{Z}$
(Definition \ref{def:b_k}).
We show that, if the manifold is the $n$-sphere $S^n$ and the $n$-form
$\Omega$ is the normalized standard volume form on $S^n$,
the cohomology class $[c_k]$ is equal to the Euler class of
flat sphere bundle up to sign
(Theorem \ref{thm:cocycle_euler_class}).
By using this, we also show that the group cohomology classes
$[c_k]$ and $[b_k]$ are non-zero for the $n$-sphere $S^n$
(Theorem \ref{thm:non-trivial_example}).

Some group cocycles have been constructed
on groups of diffeomorphisms that preserve a fixed differential form,
such as the symplectomorphism group and
the volume-preserving diffeomorphism group.
On the symplectomorphism group of an exact symplectic manifold,
Ismagilov, Losik, and Michor constructed
in \cite{ismagilov_losik_michor06} a group two-cocycle
with coefficients in $\mathbb{R}$.
On the symplectomorphism group of an integral symplectic manifold,
a group three-cocycle
with coefficients in $\mathbb{Z}$ is constructed
in \cite{maruyama19_DD}, which is
a variant of Ismagilov, Losik, and Michor's one.
Our group cocycles $c_k$ are considered as
generalizations of the cocycle in \cite{maruyama19_DD}.
On the group of diffeomorphisms that preserve a fixed exact form,
Losik and Michor constructed in \cite{losik_michor06}
a group cocycle with coefficients in $\mathbb{R}$.
Our group cocycles $b_k$ are analogous to Losik and Michor's one.

\section{Group cocycles}\label{sec:group_cocycle}
\subsection{Group cohomology}\label{subsec:cohomology}
Let $G$ be a group and $A$ be a $G$-module.
The set of all maps
$C_{\grp}^p(G;A) = \{ c : G^p \to A : \text{map} \}$
is called {\it the group $p$-cochains
of $G$ with coefficients in $A$}.
The coboundary operator
$\delta : C_{\grp}^p(G;A) \to C_{\grp}^{p+1}(G;A)$
is defined by
\begin{align*}
  \delta c (g_1, \dots, g_{p+1}) =&
  g_1 c(g_2, \dots, g_{p+1}) +
  \sum_{i = 1}^{p}(-1)^i c(g_1, \dots, g_i g_{i+1},
  \dots, g_{p+1}) \\
  & +(-1)^{p+1}c(g_1, \dots, g_p)
\end{align*}
if $A$ is a left $G$-module and
\begin{align*}
  \delta c (g_1, \dots, g_{p+1}) =&
  c(g_2, \dots, g_{p+1}) +
  \sum_{i = 1}^{p}(-1)^i c(g_1, \dots, g_i g_{i+1},
  \dots, g_{p+1}) \\
  & +(-1)^{p+1}c(g_1, \dots, g_p)g_{p+1}
\end{align*}
if $A$ is a right $G$-module.
The {\it group cohomology $H_{\grp}^*(G;A)$ of $G$
with coefficients in $A$} is the cohomology of the
cochain complex $(C_{\grp}^*,\delta)$.

Let $G^{\delta}$ denote the group $G$ with discrete topology.
Then the group cohomology of $G$ is isomorphic to the
singular cohomology of the classifying space $BG^{\delta}$
of $G^{\delta}$
(see \cite{brown82}).
Under this identification, a group cohomology class gives
a universal characteristic class of flat $G$-bundles.

\subsection{The group cocycles with coefficients in $\mathbb{R}$}\label{subsec:cocycle}
Let $M$ be a connected manifold and $n$ denote the
minimum positive number that the homology
$H_n(M;\mathbb{Z})$ is non-zero.
Let $\Omega$ denote a closed $n$-form on $M$
such that the cohomology class $[\Omega] \in H^n(M;\mathbb{R})$
is a non-zero image of the map
$H^n(M;\mathbb{Z}) \to H^n(M;\mathbb{R})$.
The typical example of $(M,\Omega)$ is a homology $n$-sphere and its
(normalized) volume form.
Let $G = \Diff_{\Omega}(M)$ denote the $\Omega$-preserving
diffeomorphism group.
We regard the integers $\mathbb{Z}$ as the trivial $G$-module.
Then we define group $(n+1)$-cocycles $c_k$ in
$C_{\grp}^{n+1}(G;\mathbb{Z})$ as follows.

Let $(C_*(M;\mathbb{Z}),\partial)$ and $(C^*(M;\mathbb{Z}),d)$ denote
the singular chain complex and the singular cochain complex
respectively.
We regard $C_q(M;\mathbb{Z})$ and $C^q(M;\mathbb{Z})$
as the left $G$-module and the right $G$-module respectively.
Let us consider the double complexes
$C_{\grp}^p(G;C_q(M;\mathbb{Z}))$ and
$C_{\grp}^p(G;C^q(M;\mathbb{Z}))$.
Take a point $\Delta_0 = x \in M = C_0(M;\mathbb{Z})$ and a
singular $n$-cocycle
$w_n \in C^n(M;\mathbb{Z}) = C_{\grp}^0(G;C^n(M;\mathbb{Z}))$
that the cohomology class $[w_n] \in H^n(M;\mathbb{Z})$
corresponds to the class $[\Omega]$ in $H^n(M;\mathbb{R})$.
By the assumption of $M$, we take elements
$\Delta_k \in C_{\grp}^k(G;C_k(M;\mathbb{Z}))$
for $0 \leq k < n$
satisfying
\[
  \delta \Delta_k = \partial \Delta_{k+1}
  \in C_{\grp}^{k+1}(G;C_k(M;\mathbb{Z})).
\]
Since the map $H^n(M;\mathbb{Z}) \to H^n(M;\mathbb{R})$
is injective, any element in $G$ preserves the cohomology class
$[w_n]$, that is, $\delta w_n (g) = w_n - g^* w_n$ is a coboundary
for any $g \in G$.
Thus we take an element $w_{n-1} \in C_{\grp}^1(G;C^{n-1}(M;\mathbb{Z}))$
such that
\[
  \delta w_n = - (-1) d w_{n-1}
  \in C_{\grp}^{1}(G;C^n(M;\mathbb{Z})).
\]
Since $H^k(M;\mathbb{Z}) = 0$ for $0 < k < n$ by the universal coefficients
theorem,
we take elements $w_k \in C_{\grp}^{n-k}(G;C^k(M;\mathbb{Z}))$
such that
\[
  \delta w_k = - (-1)^{n-k+1} d w_{k-1}
  \in C_{\grp}^{n-k+1}(G;C^k(M;\mathbb{Z})).
\]

Let $\langle \cdot ,\cdot \rangle : C^r(M;\mathbb{Z}) \times
C_r(M;\mathbb{Z}) \to \mathbb{Z}$ denote the pairing.
This map induces the map
\[
  \langle \cdot ,\cdot \rangle :
  C_{\grp}^p(G;C^r(M;\mathbb{Z})) \times
  C_{\grp}^q(G;C_r(M;\mathbb{Z})) \to
  C_{\grp}^{p+q}(G;\mathbb{Z}).
\]

\begin{definition}\label{def:c_k}
  For $0 \leq k \leq n$, define $c_k \in C_{\grp}^{n+1}(G;\mathbb{Z})$
  by $c_k = \langle \delta w_k, \Delta_k \rangle$.
\end{definition}

To show that the cochains $c_k$ are cocycles, we use the following
proposition.
\begin{proposition}\label{prop:chain_rule}
  For any $(a,b) \in C_{\grp}^p(G;C^r(M;\mathbb{Z})) \times
  C_{\grp}^q(G;C_r(M;\mathbb{Z}))$,
  we have
  \[
    \delta \langle a,b \rangle = \langle \delta a, b \rangle
    + (-1)^p \langle a, \delta b \rangle.
  \]
\end{proposition}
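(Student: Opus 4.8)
The plan is to prove the identity by direct expansion of both sides, viewing $\langle\cdot,\cdot\rangle$ as a cup product on the bar complex. Recall that the induced pairing is
\[
  \langle a, b\rangle(g_1,\dots,g_{p+q}) = \langle a(g_1,\dots,g_p),\, b(g_{p+1},\dots,g_{p+q})\rangle,
\]
where $a$ is evaluated on the first $p$ arguments (a right $G$-module element of $C^r(M;\mathbb{Z})$) and $b$ on the last $q$ (a left $G$-module element of $C_r(M;\mathbb{Z})$). The one structural input I will use is the adjointness of pullback and pushforward: for $w \in C^r(M;\mathbb{Z})$, $\gamma \in C_r(M;\mathbb{Z})$ and $g\in G$ one has $\langle w\cdot g,\gamma\rangle = \langle w, g\cdot\gamma\rangle$ (that is, $\langle g^*w,\gamma\rangle = \langle w, g_*\gamma\rangle$); equivalently, the pairing is $G$-balanced. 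This is the property that will force the cross terms to cancel.

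First I would expand the left-hand side $\delta\langle a,b\rangle(g_1,\dots,g_{p+q+1})$ using the coboundary for the trivial module $\mathbb{Z}$ and substitute the formula above. The resulting terms sort according to where the contraction $g_ig_{i+1}$ occurs: for $i<p$ it stays inside the $a$-block, for $i>p$ inside the $b$-block, and for $i=p$ it straddles the two blocks, producing the single interface term $(-1)^p\langle a(g_1,\dots,g_{p-1},g_pg_{p+1}),\, b(g_{p+2},\dots)\rangle$. The leading term $\langle a(g_2,\dots,g_{p+1}),b(\dots)\rangle$ and the trailing term $(-1)^{p+q+1}\langle a(\dots),b(g_{p+1},\dots,g_{p+q})\rangle$ I would record separately.

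Next I would expand the two summands on the right. Computing $\langle\delta a,b\rangle$ with the right-module coboundary reproduces the leading term, the interior $a$-block contractions (including the $i=p$ interface term, with the same sign $(-1)^p$), and one extra term $(-1)^{p+1}\langle a(g_1,\dots,g_p)\cdot g_{p+1},\, b(g_{p+2},\dots)\rangle$ coming from the right action in the last slot of $\delta a$. Likewise $(-1)^p\langle a,\delta b\rangle$, computed with the left-module coboundary, reproduces the trailing term, the interior $b$-block contractions (with matching signs $(-1)^{p+j}$), and one extra term $(-1)^p\langle a(g_1,\dots,g_p),\, g_{p+1}\cdot b(g_{p+2},\dots)\rangle$ coming from the left action in the first slot of $\delta b$. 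Organizing each coboundary sum by splitting it at the index $i=p$ keeps this bookkeeping manageable.

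Finally I would match everything term by term. All interior contractions, the leading term, the trailing term, and the interface term appear identically on both sides, so in the difference $\delta\langle a,b\rangle-\langle\delta a,b\rangle-(-1)^p\langle a,\delta b\rangle$ only the two extra terms survive, namely $(-1)^{p+1}\langle a(g_1,\dots,g_p)\cdot g_{p+1}, b(g_{p+2},\dots)\rangle + (-1)^p\langle a(g_1,\dots,g_p), g_{p+1}\cdot b(g_{p+2},\dots)\rangle$. By balancedness these are negatives of one another, so their sum vanishes. This cancellation at the interface is the only nontrivial step and the sole place where the $G$-equivariance of the pairing enters; everything else is sign and index bookkeeping.
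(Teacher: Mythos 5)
Your proof is correct: the paper itself omits the argument as a ``straightforward calculation,'' and your direct expansion of both sides with the front-face/back-face formula for the induced pairing is exactly that calculation, correctly isolating the one non-formal step, namely that the interface terms $(-1)^{p+1}\langle a(g_1,\dots,g_p)\cdot g_{p+1},\,b(\cdots)\rangle$ and $(-1)^{p}\langle a(g_1,\dots,g_p),\,g_{p+1}\cdot b(\cdots)\rangle$ cancel by the $G$-balancedness $\langle g^*w,\gamma\rangle=\langle w,g_*\gamma\rangle$ of the chain--cochain pairing. The signs also match the paper's later use of the identity (e.g.\ the factor $(-1)^{n-k+1}$ in the proof of Proposition \ref{prop:cohomologous}).
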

Since the proof is the straight forward calculation, we omit it.

\begin{proposition}\label{prop:cohomologous}
  The group cochains $c_k$ are cocycles and
  cohomologous to each other.
\end{proposition}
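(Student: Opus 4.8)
The plan is to reduce everything to the Leibniz-type formula of Proposition \ref{prop:chain_rule}, together with two further identities that I would record first (both are of the same ``straightforward calculation'' nature as Proposition \ref{prop:chain_rule}): the adjointness $\langle da,b\rangle=\langle a,\partial b\rangle$ of the induced pairing, reflecting that $d$ and $\partial$ are dual on the coefficient complexes; and the commutativity of $d$ and of $\partial$ with the group coboundary $\delta$, which holds because the $G$-action on (co)chains is by chain maps. Granting these, the two defining zig-zag relations $\delta\Delta_k=\partial\Delta_{k+1}$ and $\delta w_k=-(-1)^{n-k+1}dw_{k-1}$ (equivalently $dw_k=-(-1)^{n-k}\delta w_{k+1}$) are exactly what is needed to move the ``corner'' of the pairing through the two double complexes.

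For the cohomology statement I would exhibit explicit primitives. Set $u_k=\langle w_k,\Delta_k\rangle\in C_{\grp}^{n}(G;\mathbb{Z})$; this pairing is defined because $w_k$ and $\Delta_k$ are valued in $C^k(M;\mathbb{Z})$ and $C_k(M;\mathbb{Z})$, and the bidegrees add to $(n-k)+k=n$. Proposition \ref{prop:chain_rule} gives
\[
  \delta u_k=\langle\delta w_k,\Delta_k\rangle+(-1)^{n-k}\langle w_k,\delta\Delta_k\rangle .
\]
The first term is $c_k$ by definition. For the second term I would substitute $\delta\Delta_k=\partial\Delta_{k+1}$, push $\partial$ across by adjointness to get $(-1)^{n-k}\langle dw_k,\Delta_{k+1}\rangle$, and then replace $dw_k=-(-1)^{n-k}\delta w_{k+1}$; the two signs combine to $-1$, leaving $-c_{k+1}$. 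Hence $\delta u_k=c_k-c_{k+1}$ for $0\le k\le n-1$, so consecutive $c_k$ differ by a coboundary and all of $c_0,\dots,c_n$ represent the same class.

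For the cocycle property I would first treat $c_0$ directly: by Proposition \ref{prop:chain_rule} the $\delta^2 w_0$ term drops and $\delta c_0=(-1)^{n+1}\langle\delta w_0,\delta\Delta_0\rangle$, and using $\delta\Delta_0=\partial\Delta_1$, adjointness, $d\delta=\delta d$, and $dw_0=\pm\delta w_1$ turns this into a multiple of $\langle\delta^2 w_1,\Delta_1\rangle=0$. Thus $\delta c_0=0$. Combining with $\delta u_k=c_k-c_{k+1}$ gives $\delta c_{k+1}=\delta c_k$, so by induction every $c_k$ is a cocycle and the $c_k$ are cohomologous to one another. (Alternatively one can show $\delta c_k=\pm\langle\delta w_k,\delta\Delta_k\rangle=0$ for each $k$ directly, using the chain relation $\delta\Delta_k=\partial\Delta_{k+1}$ when $k<n$ and, for the top corner $k=n$, the cochain relation $\delta w_n=dw_{n-1}$ together with $\partial\Delta_n=\delta\Delta_{n-1}$.)

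I expect the only real work to be the sign bookkeeping and the two boundary cases of the zig-zag: the corner $c_0$ has no $w_{-1}$ available and $c_n$ has no $\Delta_{n+1}$ available, so in each of these one must fall back on whichever of the two defining relations still exists. The conceptual content, which is what makes both assertions drop out of Proposition \ref{prop:chain_rule}, is that $c_k$ is the $k$-th diagonal entry of a single class in the total complex of the double complex and that $\langle w_k,\Delta_k\rangle$ interpolates between neighbouring corners.
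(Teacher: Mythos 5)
Your proof is correct and follows essentially the same route as the paper: the primitives $u_k=\langle w_k,\Delta_k\rangle$ and the identity $\delta u_k=c_k-c_{k+1}$ are exactly the paper's relation $c_k=c_{k-1}-\delta\langle w_{k-1},\Delta_{k-1}\rangle$, obtained from Proposition \ref{prop:chain_rule} together with adjointness and the zig-zag relations. The only (cosmetic) difference is that the paper checks $\delta c_k=0$ directly for each $k$ by asserting $\langle\delta w_k,\delta\Delta_k\rangle=0$, whereas you deduce it by induction from $\delta c_0=0$; in doing so you actually supply the corner-case justification that the paper leaves implicit.
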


\begin{proof}
  By Proposition \ref{prop:chain_rule}, we have
  \begin{align*}
    \delta c_k &= \langle \delta \delta w_k, \Delta_k \rangle
    + (-1)^{n-k+1} \langle \delta w_k, \delta \Delta_k \rangle
    = 0
  \end{align*}
  for any $0 \leq k \leq n$.
  Thus the group cochain $c_k \in C_{\grp}^{n+1}(G;\mathbb{Z})$
  is a cocycle for any $0 \leq k \leq n$.
  For $0< k \leq n$, we have
  \begin{align*}
    c_k &= \langle \delta w_k, \Delta_k \rangle
    = -(-1)^{n-k+1} \langle dw_{k-1} , \Delta_k \rangle \\
    &= -(-1)^{n-k+1} \langle w_{k-1} , \partial \Delta_k \rangle
    = -(-1)^{n-k+1} \langle w_{k-1} , \delta \Delta_{k-1} \rangle \\
    &= - (\delta \langle w_{k-1}, \Delta_{k-1} \rangle
    - \langle \delta w_{k-1}, \Delta_{k-1} \rangle)
    = c_{k-1} - \delta \langle w_{k-1}, \Delta_{k-1} \rangle.
  \end{align*}
  Thus the cocycles $c_k$ are cohomologous to each other.
\end{proof}

\begin{remark}
  If $n = 2$ and the $2$-form $\Omega$ is an integral
  symplectic form on $M$,
  then the cocycle $c_2$ is, up to sign, equal to the cocycle
  introduced in \cite{maruyama19_DD}.
\end{remark}

Let $E_r^{p,q}$ denote the spectral sequence of the
double complex $C_{\grp}^p(G;C^q(M;\mathbb{Z}))$.
Then $E_2^{p,q}$ is isomorphic to $H_{\grp}^p(G;H^q(M;\mathbb{Z}))$,
where we consider the coefficients $H^q(M;\mathbb{Z})$
as the right $G$-module by pullback.
Since $H^q(M;\mathbb{Z}) = 0$ for $0 < q < n$,
we have
\[
  E_{n+1}^{0,n} = E_{n}^{0,n} = \cdots = E_2^{0,n}
  = H^n(M;\mathbb{Z})^G,
\]
where $H^n(M;\mathbb{Z})^G$ denotes the $G$-invariant part,
and
\[
  E_{n+1}^{n+1,0} = E_{n}^{n+1,0} = \cdots = E_2^{n+1,0}
  = H_{\grp}^{n+1}(G;\mathbb{Z}).
\]
Thus the transgression map
$d_{n+1}^{0,n} : E_{n+1}^{0,n} \to E_{n+1}^{n+1,0}$
defines the map
\[
  d_{n+1}^{0,n} : H^n(M;\mathbb{Z})^G \to
  H_{\grp}^{n+1}(G;\mathbb{Z}).
\]
Since the cohomology class $[w_n]$ is in $H^n(M;\mathbb{Z})^G$,
we obtain the class $d_{n+1}^{0,n} [w_n] \in H_{\grp}^{n+1}(G;\mathbb{Z})$.

\begin{proposition}\label{prop:cocycle-s.s.}
  The cohomology class $d_{n+1}^{0,n} [w_n]$ is equal to
  the class $[c_k]$.
\end{proposition}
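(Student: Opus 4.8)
The plan is to recognize the family $w_n, w_{n-1}, \dots, w_0$ as precisely the zig-zag that computes the transgression $d_{n+1}^{0,n}$, and then to identify $c_0$ with the image of the resulting corner class under the edge isomorphism $E_2^{n+1,0} \cong H_{\grp}^{n+1}(G;\mathbb{Z})$. Since Proposition \ref{prop:cohomologous} already gives $[c_k] = [c_0]$ for all $0 \leq k \leq n$, it suffices to treat the single case $k = 0$.

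First I would recall the explicit zig-zag description of the transgression in the spectral sequence of the double complex and then verify that the chosen $w_k$ realize it. Equip the total complex with the differential $D = \delta + (-1)^p d$ on $C_{\grp}^p(G;C^q(M;\mathbb{Z}))$, and note that $w_k$ sits in bidegree $(n-k,k)$, so $D w_k = \delta w_k + (-1)^{n-k} d w_k$. Setting $W = \sum_{k=0}^{n} w_k$, I would check that $W$ telescopes: in each intermediate bidegree $(n-j+1,j)$ with $1 \leq j \leq n$ the term $\delta w_j$ cancels the term $(-1)^{n-j+1} d w_{j-1}$ coming from $D w_{j-1}$, exactly because the defining relation reads $\delta w_j = -(-1)^{n-j+1} d w_{j-1}$; the top contribution $(-1)^0 d w_n$ vanishes since $w_n$ is a singular cocycle; and only $\delta w_0$, in bidegree $(n+1,0)$, survives. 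Thus $D W = \delta w_0$, which is the assertion that $[w_n] \in E_{n+1}^{0,n}$ transgresses to the class of $\delta w_0$ in $E_{n+1}^{n+1,0}$.

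Next I would pin down the edge isomorphism concretely. As already recorded in the excerpt, the corner stabilizes to $E_{n+1}^{n+1,0} = E_2^{n+1,0} = H_{\grp}^{n+1}(G;H^0(M;\mathbb{Z})) = H_{\grp}^{n+1}(G;\mathbb{Z})$, where the final identification uses that $M$ is connected, so $H^0(M;\mathbb{Z}) \cong \mathbb{Z}$ via evaluation at a point. The representative $\delta w_0$ lies in $C_{\grp}^{n+1}(G;C^0(M;\mathbb{Z}))$ and is $d$-closed: from $d w_0 = \pm \delta w_1$ one gets $d \delta w_0 = \pm \delta d w_0 = \pm \delta \delta w_1 = 0$. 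Hence for every tuple $(g_1,\dots,g_{n+1})$ the $0$-cochain $\delta w_0(g_1,\dots,g_{n+1})$ is constant on $M$, and its value is recovered by pairing against the point $\Delta_0 = x$. Therefore the edge isomorphism carries the transgression representative $\delta w_0$ to the $\mathbb{Z}$-valued group cochain $\langle \delta w_0, \Delta_0 \rangle = c_0$, which is Definition \ref{def:c_k} at $k=0$. This gives $[c_0] = d_{n+1}^{0,n}[w_n]$, and combining with Proposition \ref{prop:cohomologous} yields $[c_k] = d_{n+1}^{0,n}[w_n]$ for all $k$.

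The main obstacle I anticipate is bookkeeping rather than conceptual: matching the sign conventions of the total differential $D$ with the relations defining the $w_k$, and confirming that the identification $H^0(M;\mathbb{Z}) \cong \mathbb{Z}$ used in the edge map is the one given by pairing with $\Delta_0$, rather than one differing by the overall sign of the transgression. I would settle these by carrying out the telescoping computation of $D W$ once in full and separately checking $d\delta w_0 = 0$; once the signs are fixed there, the identification of the corner class with $c_0$ is forced.
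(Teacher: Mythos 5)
Your proposal is correct and follows essentially the same route as the paper's own proof: identify the $w_k$ as the zig-zag computing the transgression, observe that the coboundary of its tail is $\delta w_0$, identify this with $c_0 = \langle \delta w_0, \Delta_0\rangle$ via the edge/corner identification, and invoke Proposition \ref{prop:cohomologous}. The only difference is that you carry out explicitly the telescoping and sign checks that the paper delegates to the reference on the zig-zag description of the transgression.
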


\begin{proof}
  The transgression map $d_{n+1}^{0,n}$ is given by the coboundary of
  the tail of the zig-zag
  (see, for example, \cite[Section 14]{bott_tu82}).
  Moreover, the coboundary of the tail of zig-zag is equal to
  $\delta w_0 = \langle \delta w_0, \Delta_0 \rangle = c_0$.
  Thus we have $d_{n+1}^{0,n} [w_n] = [c_0]$.
  By Proposition \ref{prop:cohomologous}, we have
  $d_{n+1}^{0,n} [w_n] = [c_k] \in H_{\grp}^{n+1}(G;\mathbb{Z})$.
\end{proof}

\begin{corollary}\label{cor:independent}
  The cohomology class $[c_k]$ is independent of the choice of
  $w_k$ and $\Delta_k$.
\end{corollary}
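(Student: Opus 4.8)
The plan is to deduce the corollary directly from the two preceding propositions rather than to re-examine the construction of the cochains. The key observation is that Proposition \ref{prop:cocycle-s.s.} identifies the cohomology class $[c_k]$ with the transgression image $d_{n+1}^{0,n}[w_n]$, and this latter object is manifestly intrinsic: it depends only on the cohomology class $[w_n] \in H^n(M;\mathbb{Z})^G$ and on the spectral sequence of the double complex $C_{\grp}^p(G;C^q(M;\mathbb{Z}))$, neither of which references the auxiliary choices $w_k$ or $\Delta_k$ made during the construction of $c_k$.

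Concretely, I would argue as follows. Fix two systems of choices, $(w_k, \Delta_k)$ and $(w_k', \Delta_k')$, each satisfying the defining relations $\delta\Delta_k = \partial\Delta_{k+1}$ and $\delta w_k = -(-1)^{n-k+1}dw_{k-1}$ with the same terminal data $[w_n] = [w_n']$ prescribed in $H^n(M;\mathbb{Z})$. Let $c_k$ and $c_k'$ denote the corresponding cocycles. By Proposition \ref{prop:cocycle-s.s.} applied to each system, we have
\[
  [c_k] = d_{n+1}^{0,n}[w_n] \qquad\text{and}\qquad [c_k'] = d_{n+1}^{0,n}[w_n'].
\]
Since $[w_n]$ and $[w_n']$ represent the same class in $H^n(M;\mathbb{Z})^G = E_{n+1}^{0,n}$ (both correspond to the fixed class $[\Omega]$), and the transgression $d_{n+1}^{0,n}$ is a well-defined homomorphism out of $E_{n+1}^{0,n}$, the two images coincide. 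Hence $[c_k] = [c_k']$ in $H_{\grp}^{n+1}(G;\mathbb{Z})$, which is exactly the asserted independence.

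One subtlety worth flagging is that Proposition \ref{prop:cocycle-s.s.} is itself phrased in terms of a particular zig-zag built from the chosen $w_k$ and $\Delta_k$, so I must confirm that its conclusion $[c_k] = d_{n+1}^{0,n}[w_n]$ holds verbatim for \emph{any} admissible system of choices, not merely for one distinguished system. This is immediate from the proof of that proposition: the transgression of a class in $E_{n+1}^{0,n}$ is computed by lifting along \emph{any} zig-zag representing it, and the standard fact that the resulting class in $E_{n+1}^{n+1,0}$ is independent of the zig-zag is precisely the well-definedness of the spectral sequence differential. The choices $w_k$ and $\Delta_k$ merely furnish one such zig-zag, so no matter which admissible choices are made, the class $[c_k]$ equals the single intrinsic class $d_{n+1}^{0,n}[w_n]$.

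The main obstacle, such as it is, is conceptual rather than computational: one must be careful that the system $\{\Delta_k\}$ and the system $\{w_k\}$ together assemble into a bona fide zig-zag computing the transgression, so that invoking the well-definedness of $d_{n+1}^{0,n}$ is legitimate. Once this identification is in place, the corollary requires no further calculation, since independence of choices is inherited wholesale from the independence of the spectral sequence differential on the choice of lifting chain.
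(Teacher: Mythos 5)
Your proof is correct and follows exactly the route the paper intends: the corollary is stated immediately after Proposition \ref{prop:cocycle-s.s.}, and the independence of $[c_k]$ from the choices of $w_k$ and $\Delta_k$ is deduced precisely from the identification $[c_k] = d_{n+1}^{0,n}[w_n]$ together with the intrinsic nature of the transgression (the paper only adds, as an aside, that a direct calculation is also possible). Your flagged subtlety about the well-definedness of the spectral sequence differential on arbitrary admissible zig-zags is the right point to check, and your resolution of it is sound.
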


Note that the above corollary can be
shown by a straight forward calculation.

\begin{remark}\label{remark:bounded}
  If we replace the coefficients $\mathbb{Z}$ with $\mathbb{R}$,
  then the class $d_{n+1}^{0,n}[\Omega]$
  in $H_{\grp}^{n+1}(G;\mathbb{R})$ is
  trivial since the zig-zag is trivial.
  Thus the cohomology class $d_{n+1}^{0,n}[w_n]$ is equal to $0$ in
  $H_{\grp}^{n+1}(G;\mathbb{R})$.
  By the exact sequence
  \[
    \cdots \to H_{\grp}^n(G;S^1) \to H_{\grp}^{n+1}(G;\mathbb{Z})
    \to H_{\grp}^{n+1}(G;\mathbb{R}) \to \cdots ,
  \]
  we have the class in $H_{\grp}^n(G;S^1)$ that hits to the class
  $d_{n+1}^{0,n}[w_n] = [c_k]$.
  Since the connecting homomorphism
  $H_{\grp}^n(G;S^1) \to H_{\grp}^{n+1}(G;\mathbb{Z})$
  factors through the bounded cohomology
  $H_b^{n+1}(G;\mathbb{Z})$,
  the cohomology class $[c_k]$ is bounded.
\end{remark}

\subsection{The group cocycles with coefficients in $S^1$}

By Remark \ref{remark:bounded}, we know the existence of the
cohomology class in $H_{\grp}^n(G;S^1)$ corresponding to the class
$[c_k] \in H_{\grp}^{n+1}(G;\mathbb{R})$
under the connecting homomorphism
\[
  \delta : H_{\grp}^n(G;S^1) \to H_{\grp}^{n+1}(G;\mathbb{Z}).
\]
In this section, we give cocycles
$b_k \in C_{\grp}^n(G;S^1)$
such that $\delta [b_k] = [c_k] \in H_{\grp}^{n+1}(G;\mathbb{Z})$.

By the assumption of the $n$-form $\Omega$ and the exact
sequence
\[
  \to H^n(M;\mathbb{Z}) \to H^n(M;\mathbb{R}) \xrightarrow{j}
  H^n(M;S^1) \to ,
\]
we have $j[\Omega] = 0$.
Here we consider $\Omega$ as the corresponding singular $n$-cocycle
(if we temporally use the symbol $\Omega_{\sing}$ to denote the
corresponding singular cocycle, this cocycle is
defined by $\Omega_{\sing} (\sigma) = \int_{\sigma} \Omega$
for any singular $n$-simplex $\sigma$).
We take a singular $(n-1)$-cochain $\eta_{n-1} \in C^{n-1}(M;S^1)$
such that $d\eta_{n-1} = j\Omega \in C^n(M;S^1)$.
By the universal coefficients theorem, the cohomology
$H^k(M;S^1)$ is trivial for $0 < k < n$.
Thus, as with the definition of
$w_k \in C_{\grp}^{n-k}(G;C^k(M;\mathbb{Z}))$,
we define group cochains
$\eta_k \in C_{\grp}^{n-1-k}(G;C^k(M;S^1))$ by
\[
  \delta \eta_k = -(-1)^{n-k} d\eta_{k-1}
\]
for $0 < k < n$.
Let $\Delta_k \in C_{\grp}^k(G;C_k(M;\mathbb{Z}))$ be the
cochains defined in Section \ref{subsec:cocycle}.
\begin{definition}\label{def:b_k}
  For $0 \leq k \leq n-1$, define
  $b_k \in C_{\grp}^n(G;S^1)$ by
  $b_k = \langle \delta \eta_k , \Delta_k \rangle$.
\end{definition}

As with Proposition \ref{prop:cohomologous} and Corollary
\ref{cor:independent}, we have
the following.
\begin{proposition}\label{prop:b_cohomologous}
  The group cochains $b_k$ are cocycles and
  cohomologous to each other.
  Moreover, the cohomology class $[b_k]$ is independent of the
  choice of cochains $\eta_k$ and $\Delta_k$.
\end{proposition}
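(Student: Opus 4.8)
The plan is to carry over, essentially verbatim, the arguments proving Proposition~\ref{prop:cohomologous} and Corollary~\ref{cor:independent}, with the data $(w_k,\mathbb{Z})$ replaced by $(\eta_k,S^1)$ and all degree and sign bookkeeping shifted by one, since each $\eta_k$ lives one group-degree lower than the corresponding $w_k$. The three tools are Proposition~\ref{prop:chain_rule} (the Leibniz rule for the pairing), the adjointness $\langle d\alpha,\sigma\rangle=\langle\alpha,\partial\sigma\rangle$ of the coboundary and boundary operators, and the defining relations $\delta\eta_k=-(-1)^{n-k}d\eta_{k-1}$ together with $\delta\Delta_k=\partial\Delta_{k+1}$. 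There are three assertions to establish: each $b_k$ is a cocycle, the $b_k$ are mutually cohomologous, and $[b_k]$ is independent of the choices of $\eta_k$ and $\Delta_k$.

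For the cocycle property, since $\delta\eta_k\in C_{\grp}^{n-k}(G;C^k(M;S^1))$, Proposition~\ref{prop:chain_rule} gives
\[
  \delta b_k=\langle\delta\delta\eta_k,\Delta_k\rangle+(-1)^{n-k}\langle\delta\eta_k,\delta\Delta_k\rangle
  =(-1)^{n-k}\langle\delta\eta_k,\partial\Delta_{k+1}\rangle,
\]
the first term vanishing because $\delta^2=0$. For $1\le k\le n-1$ I would substitute $\delta\eta_k=-(-1)^{n-k}d\eta_{k-1}$, move $d$ across the pairing by adjointness, and use $\partial^2=0$ to obtain $\langle\delta\eta_k,\partial\Delta_{k+1}\rangle=-(-1)^{n-k}\langle\eta_{k-1},\partial\partial\Delta_{k+1}\rangle=0$. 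The endpoint $k=0$, where no relation constrains $\delta\eta_0$ directly, I would instead settle using the cochain identity of the next step.

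For the cohomologous assertion I would reproduce the telescoping computation of Proposition~\ref{prop:cohomologous}: for $1\le k\le n-1$,
\begin{align*}
  b_k&=\langle\delta\eta_k,\Delta_k\rangle
  =-(-1)^{n-k}\langle d\eta_{k-1},\Delta_k\rangle
  =-(-1)^{n-k}\langle\eta_{k-1},\partial\Delta_k\rangle\\
  &=-(-1)^{n-k}\langle\eta_{k-1},\delta\Delta_{k-1}\rangle
  =b_{k-1}-\delta\langle\eta_{k-1},\Delta_{k-1}\rangle,
\end{align*}
where the last equality applies Proposition~\ref{prop:chain_rule} to $\langle\eta_{k-1},\Delta_{k-1}\rangle$. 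This exhibits $b_k-b_{k-1}$ as a coboundary at the cochain level, so all the $b_k$ differ by coboundaries; in particular, once any single $b_k$ with $1\le k\le n-1$ is known to be a cocycle, so is $b_0$, which completes the previous step.

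Finally, for independence of the choices I would follow the direct argument indicated after Corollary~\ref{cor:independent}: given two systems $\{ \eta_k,\Delta_k \}$ and $\{ \eta_k',\Delta_k' \}$, the differences $\eta_k'-\eta_k$ and $\Delta_k'-\Delta_k$ satisfy the same relations in one lower total degree (using that both $\eta_{n-1}$ and $\eta_{n-1}'$ are primitives of the fixed cocycle $j\Omega$, and that $M$ is connected so $\Delta_0'-\Delta_0$ is a boundary), and pairing them as above produces an explicit primitive for $b_k'-b_k$. Conceptually the cleanest formulation is the spectral-sequence one: $[b_k]$ is the transgression-type class obtained from the zig-zag of the double complex $C_{\grp}^p(G;C^q(M;S^1))$ starting from $\eta_{n-1}$, and its independence is the well-definedness of that transgression. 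The main obstacle I anticipate is precisely this last point: because $\eta_{n-1}$ is only a primitive of $j\Omega$ rather than a genuine cocycle, $[b_k]$ is a secondary, transgressive class rather than a plain transgression, so care is needed to check that the ambiguity in choosing $\eta_{n-1}$ (a cocycle in $Z^{n-1}(M;S^1)$) contributes only a coboundary to $b_k$; the degenerate endpoints $k=0$ and the top degree, where $\Delta_n$ and $j\Omega$ enter, are where the bookkeeping is most delicate.
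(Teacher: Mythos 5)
Your proposal is correct and carries out exactly what the paper intends: the paper proves this proposition only by declaring it analogous to Proposition \ref{prop:cohomologous} and Corollary \ref{cor:independent}, and your computations realize that analogy with the correct degree shift and signs (including the telescoping identity $b_k=b_{k-1}-\delta\langle\eta_{k-1},\Delta_{k-1}\rangle$ and the observation that the ambiguity in $\eta_{n-1}$ lies in $Z^{n-1}(M;S^1)$). The only loose end is the degenerate case $n=1$, where your fallback for $k=0$ has no $b_1$ to telescope to; there the direct computation $\delta b_0=-\langle\delta d\eta_0,\Delta_1\rangle=-\langle\delta(j\Omega),\Delta_1\rangle=0$, using that $G$ preserves $\Omega$ as a singular cocycle, closes the gap.
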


\begin{theorem}
  Let $\delta : H_{\grp}^n(G;S^1) \to H_{\grp}^{n+1}(G;\mathbb{Z})$
  denote the connecting homomorphism, then we have
  $\delta [b_k] = [c_k]$.
\end{theorem}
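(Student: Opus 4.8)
The plan is to compute the connecting homomorphism $\delta$ at the cochain level and then reduce everything to the properties of the $c_k$ that are already established. Recall that $\delta$ comes from the coefficient sequence $0\to\mathbb{Z}\to\mathbb{R}\xrightarrow{j}S^1\to 0$: given a cocycle $\beta\in C_{\grp}^n(G;S^1)$ one chooses a (set-theoretic) real lift $\hat\beta\in C_{\grp}^n(G;\mathbb{R})$ with $j\hat\beta=\beta$, and then $\delta\hat\beta$ is $\mathbb{Z}$-valued and represents $\delta[\beta]$. Since the $b_k$ are cohomologous (Proposition \ref{prop:b_cohomologous}) and the $c_k$ are cohomologous (Proposition \ref{prop:cohomologous}), it suffices to treat one index, and I would work with $b_0=\langle\delta\eta_0,\Delta_0\rangle$.

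First I would choose termwise real lifts $\hat\eta_k\in C_{\grp}^{n-1-k}(G;C^k(M;\mathbb{R}))$ of the cochains $\eta_k$ (these exist because $\mathbb{R}\to S^1$ is surjective and group/singular cochains are arbitrary maps) and set $\hat b_0=\langle\delta\hat\eta_0,\Delta_0\rangle$, a real lift of $b_0$. The heart of the argument is the family of integral defect cochains
\[
  v_k:=\delta\hat\eta_k+(-1)^{n-k}d\hat\eta_{k-1}\in C_{\grp}^{n-k}(G;C^k(M;\mathbb{Z}))\quad(0<k<n),\qquad v_n:=d\hat\eta_{n-1}-\Omega_{\sing}.
\]
Applying $j$ and using the defining relations $\delta\eta_k=-(-1)^{n-k}d\eta_{k-1}$ together with $d\eta_{n-1}=j\Omega_{\sing}$ shows that each $v_k$ lands in $\ker j=\mathbb{Z}$. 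A direct computation using $d^2=0$, $\delta^2=0$, $\delta\,d=d\,\delta$ and $d\Omega_{\sing}=\delta\Omega_{\sing}=0$ then gives $dv_k=\delta\,d\hat\eta_k$, whence the recursion $\delta v_{k+1}=(-1)^{n-k-1}dv_k$; this is exactly the zig-zag relation $\delta v_k=-(-1)^{n-k+1}dv_{k-1}$ from Definition \ref{def:c_k}. Thus $\{v_k\}_{k=1}^{n}$ is admissible zig-zag data with the \emph{same} $\Delta_k$, whose top term $v_n$ is a singular $n$-cocycle representing $-[\Omega]$.

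Next I would evaluate $\delta\hat b_0$ by Proposition \ref{prop:chain_rule} and the relation $\delta\Delta_0=\partial\Delta_1$, using the adjunction $\langle c,\partial z\rangle=\langle dc,z\rangle$ and $\delta\,d=d\,\delta$:
\[
  \delta\hat b_0=(-1)^n\langle\delta\hat\eta_0,\delta\Delta_0\rangle=(-1)^n\langle\delta\,d\hat\eta_0,\Delta_1\rangle,
\]
and then substitute $d\hat\eta_0=(-1)^{n-1}(v_1-\delta\hat\eta_1)$ to obtain $\delta\hat b_0=-\langle\delta v_1,\Delta_1\rangle$. The right-hand side is, by Definition \ref{def:c_k} applied to the $v$-system, precisely that system's cocycle $c_1$. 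Invoking Proposition \ref{prop:cocycle-s.s.} for the $v$-system identifies its cohomology class with the transgression $d_{n+1}^{0,n}[v_n]=d_{n+1}^{0,n}(-[\Omega])=-[c_0]$. Hence $\delta[b_0]=[\delta\hat b_0]=-(-[c_0])=[c_0]$, and by the cohomologousness statements this gives $\delta[b_k]=[c_k]$ for all $k$.

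I expect the real difficulty to be the sign bookkeeping. Every ingredient carries a sign: the factors $-(-1)^{n-k}$ in the definitions of $\eta_k$ and $w_k$, the Koszul sign $(-1)^p$ in Proposition \ref{prop:chain_rule}, the orientation of $v_n$ (which forces $[v_n]=-[\Omega]$ rather than $+[\Omega]$), and the sign emerging when $\delta\hat b_0$ is rewritten through $v_1$. The crux is that the $-$ hidden in $[v_n]=-[\Omega]$ and the $-$ coming out of the chain-rule computation cancel, so that the final identity carries no sign; verifying this cancellation, and checking carefully that the $v_k$ are genuinely $\mathbb{Z}$-valued and satisfy the zig-zag relations on the nose, is where the work lies. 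Once the defect cochains $v_k$ are in place, the rest is formal.
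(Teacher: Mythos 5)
Your argument is correct in substance but takes a genuinely different route from the paper's. You work at the \emph{bottom} of the zig-zag: you lift every $\eta_k$ to a real group cochain and package the failure of the lifts to satisfy the defining relations into an integral zig-zag system $v_k$ with top class $[v_n]=-[\Omega]$, then close the loop through the transgression description of Proposition \ref{prop:cocycle-s.s.}. The paper works at the \emph{top}: it lifts only the single singular cochain $\eta_{n-1}$ to $\overline{\eta}_{n-1}\in C^{n-1}(M;\mathbb{R})$, computes in three lines that $\delta\langle\delta\overline{\eta}_{n-1},\Delta_{n-1}\rangle=\langle\delta(\Omega-d\overline{\eta}_{n-1}),\Delta_n\rangle$, and observes that $w_n:=\Omega-d\overline{\eta}_{n-1}$ is an admissible integral choice of $w_n$, so by Corollary \ref{cor:independent} the right-hand side is literally a representative $c_n$ of $[c_k]$ --- no auxiliary system and no second appeal to the spectral sequence are needed. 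Your computations (integrality of the $v_k$, the relation $\delta v_k=(-1)^{n-k}dv_{k-1}$, and $\delta\hat b_0=-\langle\delta v_1,\Delta_1\rangle$) all check out, but two points need attention before the proof is complete: (a) the sentence identifying the right-hand side with ``that system's cocycle $c_1$'' is off by a sign --- it is the \emph{negative} of that system's $c_1$; your final display $-(-[c_0])=[c_0]$ silently accounts for this, so the conclusion survives, but the prose should be fixed; (b) Propositions \ref{prop:cohomologous} and \ref{prop:cocycle-s.s.} are stated for a system whose tail reaches $k=0$ and whose top class is $[\Omega]$, so to quote them for the $v$-system you must still construct $v_0$ (which exists by the same vanishing $H^1(M;\mathbb{Z})=0$ used to produce $w_0$, with the degenerate case $n=1$ handled separately) and use additivity of $d_{n+1}^{0,n}$ to evaluate it on $-[\Omega]$. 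These gaps are fillable, but they make your argument noticeably longer than the one-choice, one-computation proof in the text; what your version buys in exchange is an explicit, uniform description of all the integral defect cochains of the lifted $\eta$-system rather than just the top one.
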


\begin{proof}
  By Proposition \ref{prop:cohomologous} and \ref{prop:b_cohomologous},
  it is enough to show the equality $\delta [b_{n-1}] = [c_n]$.
  Recall that $b_{n-1} = \langle \delta\eta_{n-1},\Delta_{n-1} \rangle$ and
  $c_n = \langle \delta w_n,\Delta_n \rangle$.
  Let $\overline{\eta}_{n-1} \in C^{n-1}(M;\mathbb{R})$ be a lift of
  $\eta_{n-1} \in C^{n-1}(M;S^1)$, that is,
  $\overline{\eta}_{n-1}$ satisfies $j\overline{\eta}_{n-1}
  = \eta_{n-1}$,
  and put
  \[
    \overline{b}_{n-1} =
    \langle \delta \overline{\eta}_{n-1} , \Delta_{n-1} \rangle
    \in C_{\grp}^n(G;\mathbb{R}).
  \]
  Then we have
  \begin{align*}
    \delta \overline{b}_{n-1} &= \delta
    \langle \delta \overline{\eta}_{n-1} , \Delta_{n-1} \rangle
    =
    \langle \delta \delta \overline{\eta}_{n-1} , \Delta_{n-1} \rangle
    - \langle \delta \overline{\eta}_{n-1} , \delta\Delta_{n-1} \rangle\\
    & = - \langle \delta \overline{\eta}_{n-1} ,
    \partial\Delta_{n-1} \rangle
    = - \langle d\delta \overline{\eta}_{n-1} ,\Delta_{n-1} \rangle
    = - \langle \delta d\overline{\eta}_{n-1} ,\Delta_{n-1} \rangle.
  \end{align*}
  Since the action by $G$ preserves $\Omega$ as the singular $n$-cocycle,
  we have $\delta \Omega = 0 \in C_{\grp}^1(G;C^n(M;\mathbb{R}))$.
  Thus we have
  \begin{align*}
    \delta \overline{b}_{n-1} =
    - \langle \delta d\overline{\eta}_{n-1} ,\Delta_{n-1} \rangle
    = \langle \delta (\Omega - d\overline{\eta}_{n-1})
    ,\Delta_{n-1} \rangle.
  \end{align*}
  Since $j(\Omega - d\overline{\eta}_{n-1})
  = j\Omega - d\eta_{n-1} = 0$,
  the cocycle $\Omega - d\overline{\eta}_{n-1}$ is in
  $C^n(M;\mathbb{Z})$.
  This integer coefficients cocycle satisfies the assumption of $w_n$.
  Thus, if we put $w_n = \Omega - d\overline{\eta}_{n-1}
  \in C^n(M;\mathbb{Z})$, we have
  \[
    \delta \overline{b}_{n-1} = \langle \delta w_n , \Delta_n \rangle
    = c_n
  \]
  and this implies $\delta [b_{n-1}] = [c_n]$.
\end{proof}

\section{The Euler class of flat sphere bundles}

In this section, for the $n$-sphere $S^n$ and the normalized
standard volume form,
we show that the class $[c_k]$ is equal to the Euler class
of flat sphere bundles up to sign (Theorem \ref{thm:cocycle_euler_class})
and show that
the group cohomology classes
$[c_k]$ and $[b_k]$ are non-trivial
(Theorem \ref{thm:non-trivial_example}).

Let us recall that the construction of the Euler class in terms of
the Leray-Serre spectral sequence.
Let $E \to B$ be an oriented sphere bundle over a connected
base space $B$ and $E_r^{p,q}$
denote the Leray-Serre spectral sequence.
Since $H^k(S^n;\mathbb{Z}) = 0$ for $0<k<n$
and the bundle is oriented,
we have
$E_{n+1}^{0,n} = E_2^{0,n} = H^n(S^n;\mathbb{Z})$
and
$E_{n+1}^{n+1,0} = E_2^{n+1,0} = H^{n+1}(B;\mathbb{Z})$.
Let $d_{n+1}^{0,n} : E_{n+1}^{0,n}
\to E_{n+1}^{n+1,0}$ denote the derivation map
and $\theta$ denote
the generator of the cohomology $H^n(S^n;\mathbb{Z})
= E_{n+1}^{0,n}$,
then the cohomology class $-d_{n+1}^{0,n} \theta \in H^{n+1}(B;\mathbb{Z})$
is the Euler class of the oriented sphere bundle
$E \to B$.

Let $\Diff_{+}(S^n)$ denote the orientation-preserving
diffeomorphism group and
$e \in H^{n+1}(B\Diff_{+}(S^n)^{\delta};\mathbb{Z}) \cong
H_{\grp}^{n+1}(\Diff_{+}(S^n);\mathbb{Z})$
denote the universal Euler class of flat sphere bundles.
Let us consider the normalized standard volume form
$\Omega$ on $S^n$.
Then the volume-preserving diffeomorphism group
$G = \Diff_{\Omega}(S^n)$ is included in $\Diff_{+}(S^n)$.
Let $e_G \in H_{\grp}^{n+1}(G;\mathbb{Z})$ denote
the pullback of the Euler class.
By the naturality of the Euler class, the class $e_G$
is the universal Euler class of flat sphere bundles
whose structure group is reduced to $G$.

\begin{theorem}\label{thm:cocycle_euler_class}
  The cohomology class $[c_k] \in H_{\grp}^{n+1}(G;\mathbb{Z})$ is
  equal to the negative of the Euler class $e_G$.
\end{theorem}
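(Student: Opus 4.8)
The plan is to identify the spectral sequence $E_r^{p,q}$ of the double complex $C_{\grp}^p(G;C^q(S^n;\mathbb{Z}))$, which computes the class $[c_k]$ via Proposition \ref{prop:cocycle-s.s.}, with the Leray--Serre spectral sequence of the universal flat sphere bundle $\pi \colon EG^{\delta} \times_G S^n \to BG^{\delta}$, which computes the Euler class $e_G$. First I would recall that the total complex of the bicomplex $C_{\grp}^p(G;C^q(S^n;\mathbb{Z}))$ is the standard complex computing the equivariant (Borel) cohomology $H^*(EG^{\delta}\times_G S^n;\mathbb{Z})$. Filtering by the group-cochain degree $p$ and taking the singular cohomology (the $d$-direction) first yields $E_1^{p,q} = C_{\grp}^p(G;H^q(S^n;\mathbb{Z}))$ and hence $E_2^{p,q} = H_{\grp}^p(G;H^q(S^n;\mathbb{Z}))$, which is precisely the $E_2$-page of the Leray--Serre spectral sequence of $\pi$ with the local system $H^q(S^n;\mathbb{Z})$. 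By the naturality and essential uniqueness of these constructions, this gives an isomorphism of the two spectral sequences from the $E_2$-page onward, compatible with all differentials.

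Under this isomorphism the algebraic transgression $d_{n+1}^{0,n}\colon E_{n+1}^{0,n} \to E_{n+1}^{n+1,0}$ of the bicomplex coincides with the derivation $d_{n+1}^{0,n}$ of the Leray--Serre spectral sequence used in the definition of the Euler class. Indeed, since every element of $G$ preserves the orientation of $S^n$, the action on $H^n(S^n;\mathbb{Z})$ is trivial, so both source terms are $E_2^{0,n} = H^n(S^n;\mathbb{Z})^G = H^n(S^n;\mathbb{Z})$ and both target terms are $E_2^{n+1,0} = H_{\grp}^{n+1}(G;\mathbb{Z})$; the two transgressions agree because each is computed by the same zig-zag in the common double complex.

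It remains to match the distinguished classes. By construction $[w_n] \in H^n(S^n;\mathbb{Z})$ is the integral class corresponding to the normalized volume form $[\Omega]$, hence the positive generator of $H^n(S^n;\mathbb{Z})$ compatible with the orientation; that is, $[w_n] = \theta$. Combining Proposition \ref{prop:cocycle-s.s.} with the description of the Euler class recalled above, we obtain
\[
  [c_k] = d_{n+1}^{0,n}[w_n] = d_{n+1}^{0,n}\theta = -e_G,
\]
which is the desired equality. I expect the main obstacle to be the first step: one must verify carefully that the bicomplex $C_{\grp}^p(G;C^q(S^n;\mathbb{Z}))$ genuinely computes the Borel cohomology of the flat bundle and that the resulting comparison is an isomorphism of spectral sequences respecting the transgression \emph{together with its sign}, since the final sign in $[c_k] = -e_G$ hinges on this compatibility.
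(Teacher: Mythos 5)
Your proposal is correct and follows essentially the same route as the paper: identify the double-complex spectral sequence with the Leray--Serre spectral sequence of the Borel construction $EG^{\delta}\times_{G^{\delta}}S^n \to BG^{\delta}$ (the paper delegates this comparison to a citation of Losik rather than sketching it), note that $[w_n]=\theta$ is the orientation generator, and combine Proposition \ref{prop:cocycle-s.s.} with the description of the Euler class as $-d_{n+1}^{0,n}\theta$. The only difference is that you outline the verification of the spectral-sequence isomorphism that the paper treats as a known fact.
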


\begin{proof}
  Let $EG^{\delta} \to BG^{\delta}$
  denote the universal $G^{\delta}$-bundle.
  Then the Borel construction
  $S_{G^{\delta}}^n = EG^{\delta} \times_{G^{\delta}} S^n \to BG^{\delta}$
  is the universal flat sphere bundle.
  Note that the Leray-Serre spectral sequence
  of the Borel construction is isomorphic to
  the spectral sequence used in Section \ref{subsec:cocycle}
  (see \cite{losik93}).
  Thus, by the construction of the Euler class
  in the Leray-Serre spectral sequence,
  the class $d_{n+1}^{0,n}[w_n] \in H_{\grp}^{n+1}(G;\mathbb{Z})$
  is equal to the negative of
  the Euler class of the flat sphere bundle
  $S_{G^{\delta}}^n \to BG^{\delta}$
  under the
  identification $H_{\grp}^{n+1}(G;\mathbb{Z}) \cong
  H^{n+1}(BG^{\delta};\mathbb{Z})$.
  By Proposition \ref{prop:cocycle-s.s.},
  the group cocycles $c_k$ give the negative of the
  Euler class of the universal flat sphere bundle.
\end{proof}

Since the natural action by $SO(n+1)$ on $S^n$ preserves the normalized
standard volume form $\Omega$, there is the inclusion
$SO(n+1) \to G$.
Let $e_{SO(n+1)} \in H_{\grp}^{n+1}(SO(n+1);\mathbb{Z})$ denote
the pullback of $e_G$ by the inclusion.
By the naturality of the Euler class, the class
$e_{SO(n+1)}$ is the universal Euler class of flat sphere bundles
whose structure group is reduced to $SO(n+1)$.
The universal Euler class in $H^{n+1}(BSO(n+1);\mathbb{Z})$
of vector bundles hits the class $e_{SO(n+1)}$ under
the canonical map
\[
  H^{n+1}(BSO(n+1);\mathbb{Z}) \to H^{n+1}(BSO(n+1)^{\delta};\mathbb{Z})
  \cong H_{\grp}^{n+1}(SO(n+1);\mathbb{Z}).
\]
Since the canonical map is injective (see \cite{milnor83}),
the class $e_{SO(n+1)}$ is non-trivial and
so is the class $e_G$.
Thus, we obtain the following theorem.
\begin{theorem}\label{thm:non-trivial_example}
  Let $M$ be the $n$-sphere and $\Omega$ the normalized
  standard volume form.
  Then the classes $[c_k] \in H_{\grp}^{n+1}(G;\mathbb{Z})$
  and $[b_k] \in H_{\grp}^n(G;S^1)$ are non-trivial.
\end{theorem}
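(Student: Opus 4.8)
The plan is to reduce the nonvanishing of $[c_k]$ to the nonvanishing of the universal Euler class of flat sphere bundles, and then to test that class on a well-understood compact subgroup. By Theorem \ref{thm:cocycle_euler_class} we have $[c_k] = -e_G$ in $H_{\grp}^{n+1}(G;\mathbb{Z})$, so it suffices to prove $e_G \neq 0$. Since the linear action of $SO(n+1)$ on $S^n$ preserves the normalized volume form $\Omega$, there is an inclusion $\iota : SO(n+1) \hookrightarrow G$, and by naturality of the Euler class the restriction $\iota^* e_G$ is the universal Euler class $e_{SO(n+1)} \in H_{\grp}^{n+1}(SO(n+1);\mathbb{Z})$ of flat $SO(n+1)$-sphere bundles. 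As $\iota^*$ sends $0$ to $0$, it is enough to show $e_{SO(n+1)} \neq 0$; here the whole point of passing to $SO(n+1)$ is that it is a \emph{compact Lie group}, for which the discrete and topological classifying spaces can be meaningfully compared, in contrast to the infinite-dimensional $G$ itself.

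I would then compare $BSO(n+1)^\delta$ with $BSO(n+1)$. Under the identification $H_{\grp}^{n+1}(SO(n+1);\mathbb{Z}) \cong H^{n+1}(BSO(n+1)^\delta;\mathbb{Z})$, the class $e_{SO(n+1)}$ is the image of the ordinary universal Euler class $e \in H^{n+1}(BSO(n+1);\mathbb{Z})$ under the canonical map induced by making the group discrete. To conclude $e_{SO(n+1)} \neq 0$ I would reduce modulo $2$: the mod-$2$ reduction of $e$ is the top Stiefel--Whitney class $w_{n+1}$, which is a nonzero element of $H^*(BSO(n+1);\mathbb{Z}/2) = \mathbb{Z}/2[w_2, \dots, w_{n+1}]$, so $e \neq 0$ regardless of the parity of $n$ (this sidesteps the fact that $e$ itself is only $2$-torsion when $n+1$ is odd). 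I expect the main obstacle to be showing that this class is not destroyed by passing to the discrete group, i.e.\ that the comparison map $H^{n+1}(BSO(n+1);\mathbb{Z}/2) \to H^{n+1}(BSO(n+1)^\delta;\mathbb{Z}/2)$ is injective. This is the substantive external input, and I would quote Milnor's theorem on the homology of Lie groups made discrete \cite{milnor83} rather than attempt a direct proof; injectivity then forces the image of $w_{n+1}$, hence $e_{SO(n+1)} \bmod 2$, to be nonzero, so $e_{SO(n+1)} \neq 0$, whence $e_G \neq 0$ and $[c_k] = -e_G \neq 0$.

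Finally, the nonvanishing of $[b_k]$ I would obtain purely formally. The preceding theorem shows that the connecting homomorphism $\delta : H_{\grp}^n(G;S^1) \to H_{\grp}^{n+1}(G;\mathbb{Z})$ of the coefficient sequence $0 \to \mathbb{Z} \to \mathbb{R} \to S^1 \to 0$ sends $[b_k]$ to $[c_k]$. Since $\delta$ is a homomorphism and $[c_k] \neq 0$ by the first part, we cannot have $[b_k] = 0$; hence $[b_k] \neq 0$, which completes the plan.
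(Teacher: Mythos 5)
Your proposal is correct and follows essentially the same route as the paper: reduce to $e_G \neq 0$ via Theorem \ref{thm:cocycle_euler_class}, restrict along $SO(n+1) \hookrightarrow G$, invoke Milnor's injectivity of the comparison map $H^{*}(BSO(n+1)) \to H^{*}(BSO(n+1)^{\delta})$ to get $e_{SO(n+1)} \neq 0$, and deduce $[b_k] \neq 0$ formally from $\delta[b_k] = [c_k]$. Your additional mod-$2$ reduction to $w_{n+1}$ merely makes explicit the nonvanishing of the topological Euler class, which the paper's proof takes for granted.
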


\bibliographystyle{amsplain}
\bibliography{group_cocycle.bib}
\end{document}